\def\NAT@spacechar{~}
\theoremstyle{plain}
\newtheorem{thm}{Theorem}
\newtheorem{lem}[thm]{Lemma}
\newtheorem{ques}[thm]{Question}
\crefname{lem}{Lemma}{Lemmas}
\crefname{thm}{Theorem}{Theorems}
\newcommand{\arXiv}[1]{arXiv:\,\href{http://arxiv.org/abs/#1}{#1}}
\newcommand{\msn}[1]{MR:\,\href{http://www.ams.org/mathscinet-getitem?mr=MR#1}{#1}}
\newcommand{\doi}[1]{doi:\,\href{http://dx.doi.org/#1}{#1}}
\renewcommand{\baselinestretch}{1.2}
\renewcommand{\thefootnote}{\fnsymbol{footnote}}	
\newcommand\DateFootnote{
\begingroup
\renewcommand\thefootnote{}
\footnote{17$^{th}$ July, 2017,; revised \today}
\setcounter{footnote}{0}
\vspace*{-3ex}
\endgroup}
\renewcommand\section{\@startsection {section}{1}{\z@}{-3ex \@plus -1ex \@minus -.2ex}{2ex \@plus.2ex}{\normalfont\large\bfseries}}
\renewcommand\subsection{\@startsection{subsection}{2}{\z@}{-2.5ex\@plus -1ex \@minus -.2ex}{1.5ex \@plus .2ex}{\normalfont\normalsize\bfseries}}
\renewcommand\subsubsection{\@startsection{subsubsection}{3}{\z@}{-2ex\@plus -1ex \@minus -.2ex}{1ex \@plus .2ex}{\normalfont\normalsize\bfseries}}
 \renewcommand\paragraph{\@startsection{paragraph}{4}{\z@}{1.5ex \@plus.5ex \@minus.2ex}{-1em}{\normalfont\normalsize\bfseries}}
\renewcommand\subparagraph{\@startsection{subparagraph}{5}{\parindent}  {1.5ex \@plus.5ex \@minus .2ex}  {-1em} {\normalfont\normalsize\bfseries}}
\renewcommand{\thefootnote}{\fnsymbol{footnote}}	
\newcommand{\PP}{\mathcal{P}}
\newcommand{\RP}[1]{\mathcal{P}\!\langle#1\rangle}
\newcommand{\CC}{\mathcal{C}}
\DeclarePairedDelimiter\floor\lfloor\rfloor
\renewcommand{\geq}{\geqslant}
\renewcommand{\leq}{\leqslant}
\begin{document}

{\Large\bfseries\boldmath\scshape Bad News for Chordal Partitions}

\medskip
Alex Scott\footnotemark[3] \quad
Paul Seymour\footnotemark[4] \quad
David R. Wood\footnotemark[5]

\DateFootnote

\footnotetext[3]{Mathematical Institute, University of Oxford, Oxford, U.K.\ (\texttt{scott@maths.ox.ac.uk}).}

\footnotetext[4]{Department of Mathematics, Princeton University, New Jersey, U.S.A. (\texttt{pds@math.princeton.edu}). Supported by ONR grant N00014-14-1-0084 and NSF grant DMS-1265563.}

\footnotetext[5]{School of Mathematical Sciences, Monash University, Melbourne, Australia\\ (\texttt{david.wood@monash.edu}). Supported by the Australian Research Council.}

\emph{Abstract.} Reed and Seymour [1998] asked whether every graph has a partition into induced connected non-empty bipartite subgraphs such that the quotient graph is chordal. If true, this would have significant ramifications for Hadwiger's Conjecture. We prove that the answer is `no'. In fact, we show that the answer is still `no' for several relaxations of the question.

\bigskip

\medskip

\hrule

\bigskip

\renewcommand{\thefootnote}{\arabic{footnote}}	
\section{Introduction}

Hadwiger's Conjecture \citep{Hadwiger43} says that for all $t\geq 0$ every graph with no $K_{t+1}$-minor is $t$-colourable. This conjecture is easy for $t\leq 3$, is equivalent to the 4-colour theorem for $t=4$, is true for $t=5$ \citep{RST-Comb93}, and is open for $t\geq 6$. The best known upper bound on the chromatic number is $O(t\sqrt{\log t})$, independently due to \citet{Kostochka82,Kostochka84} and \citet{Thomason84,Thomason01}. This conjecture is widely considered to be one of the most important open problems in graph theory; see \citep{SeymourHC} for a survey. 

Throughout this paper, we employ standard graph-theoretic definitions (see \citep{Diestel4}), with one important exception: we say that a graph $G$ \emph{contains} a graph $H$ if $H$ is isomorphic to an induced subgraph of $G$. 
 
Motivated by Hadwiger's Conjecture, \citet{ReedSeymour-JCTB98} introduced the following definitions\footnote{\citet{ReedSeymour-JCTB98} used different terminology: `chordal decomposition' instead of chordal partition, and `touching pattern' instead of quotient.}. A \emph{vertex-partition}, or simply \emph{partition}, of a graph $G$ is a set $\PP$ of non-empty induced subgraphs of $G$ such that each vertex of $G$ is in exactly one element of $\PP$. Each element of $\PP$ is called a \emph{part}. The \emph{quotient} of $\PP$ is the graph, denoted by $G/\PP$, with vertex set $\PP$ where distinct parts $P,Q\in \PP$ are adjacent in $G/\PP$ if and only if some vertex in $P$ is adjacent in $G$ to some vertex in $Q$. A partition of $G$ is \emph{connected} if each part is connected. We (almost) only consider connected partitions. In this case, the quotient is the minor of $G$ obtained by contracting each part into a single vertex. A  partition is \emph{chordal} if it is connected and the quotient is chordal (that is, contains no induced cycle of length at least four). Every graph has a chordal partition (with a 1-vertex quotient). Chordal partitions are a useful tool when studying graphs $G$ with no $K_{t+1}$ minor. Then for every connected partition $\PP$ of $G$, the quotient $G/\PP$ contains no $K_{t+1}$, so if in addition $\PP$ is chordal, then $G/\PP$ is $t$-colourable (since chordal graphs are perfect).  \citet{ReedSeymour-JCTB98} asked the following  question (repeated in \citep{KawaReed08,SeymourHC}). 

\begin{ques} 
\label{ChordalPartition}
Does every graph have a chordal partition such that each part is bipartite?
\end{ques} 

If true, this would imply that every graph with no $K_{t+1}$-minor is $2t$-colourable, by taking the product of the $t$-colouring of the quotient with the 2-colouring of each part. This would be a major breakthrough for Hadwiger's Conjecture. The purpose of this note is to answer Reed and Seymour's question in the negative. In fact, we show the following stronger result. 

\begin{thm}
\label{NoChordal}
For every integer $k\geq 1$ there is a graph $G$, such that for every chordal partition $\PP$ of $G$, some part of $\PP$ contains $K_k$. 
Moreover, for every integer $t\geq 4$ there is a graph $G$ with tree-width at most $t-1$ (and thus with no $K_{t+1}$-minor) such that 
for every chordal partition $\PP$ of $G$, some part of $\PP$ contains a complete graph on at least $\floor{(3t-11)^{1/3}}$ vertices. 
\end{thm}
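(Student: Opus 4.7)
The plan is to construct, for each $k$ (and each $t$), an explicit graph $G$ in which any chordal partition must pile many mutually adjacent vertices into a single part. The most natural approach is inductive: take $G_1 = K_1$ (trivially forcing $K_1$ in some part) and, given $G_k$, build $G_{k+1}$ by taking several copies of $G_k$ and linking them through a ``backbone'' whose induced structure is highly non-chordal, for instance a long induced cycle with each vertex blown up into a copy of $G_k$ and with some carefully chosen adjacencies across consecutive copies.

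The inductive step is meant to be used as follows. Suppose $\PP$ is a chordal partition of $G_{k+1}$ with no part containing $K_{k+1}$. By the inductive hypothesis, inside each copy of $G_k$ some part of $\PP$ already contains a $K_k$. I would then try to show that, because the copies of $G_k$ are arranged around a non-chordal cyclic backbone, the parts of $\PP$ cannot be entirely confined to single copies without leaving an induced hole in the quotient $G_{k+1}/\PP$. Hence some part $P$ of $\PP$ must straddle consecutive copies. Invoking the chordality of the quotient -- in particular the facts that chordal graphs have simplicial vertices and that minimal separators in chordal graphs are cliques -- I would then argue that peeling off simplicial parts of the quotient forces the $K_k$-containing part of one copy to also pick up a vertex from an adjacent copy that is complete to it, producing a $K_{k+1}$ and a contradiction.

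For the treewidth-bounded statement, the same inductive scheme must be run with bounded-treewidth building blocks; if $G_k$ has treewidth $T_k$, the backbone used to build $G_{k+1}$ should introduce few new vertices per bag. Plausibly $T_{k+1}\leq T_k+O(k^2)$, which gives $T_k=\Theta(k^3)$ and matches the $\floor{(3t-11)^{1/3}}$ bound after inversion. The main obstacle is not the construction itself but the chordality analysis in the second step: one must rule out every chordal partition with small parts, which means understanding every way parts can straddle the non-chordal backbone. Extracting a tight cubic (rather than quadratic or quartic) treewidth-vs.-clique trade-off likely forces a delicate balance -- enough adjacencies in the backbone to drive the induction, but few enough to keep the treewidth small -- and this is where I expect the real work of the proof to lie.
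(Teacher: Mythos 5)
Your high-level philosophy (inductive construction, forcing a non-chordal configuration in the quotient) matches the paper, but the concrete plan has genuine gaps that prevent it from being carried out. First, a single-parameter induction $G_k \to G_{k+1}$ is not sufficient: the paper uses a \emph{two-parameter} lemma, constructing graphs $G(k,r)$ in which every chordal partition yields either a $K_{kr}$ meeting $r$ distinct parts in $k$ vertices each, or a part containing $K_{k+1}$. The step that produces the $(k,1)$ case requires the full strength of the $(k-1,k+1)$ case: one needs a $K_{(k-1)(k+1)}$ whose intersection with the partition is $k+1$ pairwise-disjoint $(k-1)$-cliques $C_1,\dots,C_{k+1}$ lying in $k+1$ distinct parts, and then one attaches a small $K_{k+1}$ (\emph{not} a long cycle) so that its $i$-th vertex is complete to $C_i$. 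This is precisely what guarantees the clique grows: if the $i$-th attached vertex lands in $P_i$ it completes $C_i$ to a $K_k$. Your backbone-cycle construction does not explain why a part ``straddling'' two copies of $G_k$ would pick up a vertex adjacent to \emph{all} $k$ vertices of the $K_k$ it contains; arranging for that would seem to require near-complete bipartite joins between consecutive copies, which would destroy the treewidth bound.

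Second, your application of the inductive hypothesis inside each copy of $G_k$ is not justified. The restriction of a chordal partition to an induced subgraph is in general not even a connected partition, let alone chordal. The paper needs a separation lemma (their Lemma~\ref{InducedPartition}): if the neighbourhood of every component of $G-V(X)$ is a clique in $X$, then the restriction to $X$ is connected and its quotient is the induced subgraph of $G/\PP$ on the parts meeting $X$. In the paper's construction this condition is engineered (each attached gadget sees only a clique in $A$); in a blown-up cycle a copy $G_k^{(i)}$ has two distinct neighbouring copies, so the condition fails and the induction does not apply. Finally, your appeal to simplicial vertices and minimal separators is too vague to pin down a contradiction; the paper instead directly exhibits an induced $C_4$ in the quotient, namely $(Q,R,P_j,P_i)$ where $Q,R$ are parts meeting the attached $K_{k+1}$, and verifies the non-edges by a concrete separator argument. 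These are the missing ingredients you would need to supply.
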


\cref{NoChordal} says that it is not possible to find a chordal partition in which each part has bounded chromatic number. What if we work with a larger class of partitions? The following natural class arises. A partition of a graph is \emph{perfect} if it is connected and the quotient graph is perfect. If $\PP$ is a perfect partition of a $K_{t+1}$-minor free graph $G$, then $G/\PP$ contains no $K_{t+1}$ and is therefore $t$-colourable.  So if every part of $\PP$ has small chromatic number, then we can control the chromatic number of $G$. We are led to the following relaxation of \cref{ChordalPartition}: does every graph have a perfect partition in which every part has bounded chromatic number?  Unfortunately, this is not the case.

\begin{thm}
\label{NoPerfect}
For every integer $k\geq 1$ there is a graph $G$, such that for every perfect partition $\PP$ of $G$, some part of $\PP$ contains $K_k$. 
Moreover, for every integer $t\geq 6 $ there is a graph $G$ with tree-width at most $t-1$ (and thus with no $K_{t+1}$-minor), 
such that for every perfect partition $\PP$ of $G$, some part of $\PP$ contains a complete graph on at least $\floor{(\frac{3}{2}t-8)^{1/3}}$
 vertices. 
\end{thm}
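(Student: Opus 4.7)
The plan is to follow the blueprint of \cref{NoChordal}, but with a strengthened construction that exploits the Strong Perfect Graph Theorem of Chudnovsky, Robertson, Seymour, and Thomas: a graph is perfect if and only if it contains no induced odd cycle of length at least $5$ and no induced complement of such a cycle (an ``odd hole'' or ``odd antihole''). Preventing a perfect partition with all parts $K_k$-free thus amounts to forcing every candidate quotient to contain either an odd hole or an odd antihole.

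For the first statement (arbitrary tree-width), I would start from the graph $G_0$ built for \cref{NoChordal} with parameter $k$ and combine it with a ``complementary'' copy so as to attack both obstructions at once. Concretely, the plan is to take two vertex-disjoint copies $G_0$ and $G_0'$, identify the vertex sets via a bijection, and glue them through a carefully chosen gadget: each vertex $v$ is replaced by a pair $(v^+,v^-)$, with $v^+$ inheriting the adjacencies of $v$ in $G_0$ and $v^-$ inheriting the adjacencies of $v$ in the ``antichordal'' twin, plus a short private edge between $v^+$ and $v^-$. The role of this gadget is that any perfect partition $\PP$ of the new graph $G$ induces two derived partitions $\PP^+,\PP^-$ on $G_0$. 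If no part of $\PP$ contains a large clique, then neither does any part of $\PP^\pm$; if $G/\PP$ has no odd hole, then (by the design of the gadget) $G_0/\PP^+$ is chordal, and by \cref{NoChordal} some part of $\PP^+$ contains $K_k$; symmetrically, if $G/\PP$ has no odd antihole, then $G_0/\PP^-$ encodes a chordal partition of the complementary construction, again yielding $K_k$ in some part.

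For the tree-width version, the same construction with parameter $s$ in place of $t$ produces a graph of tree-width at most $2s-1$. Setting $s=\lceil t/2\rceil$ and plugging $s$ into the bound $\lfloor (3s-11)^{1/3}\rfloor$ of \cref{NoChordal} gives the stated threshold $\lfloor (\tfrac{3}{2}t-8)^{1/3}\rfloor$ after routine arithmetic, with the slack in the constant absorbing the rounding and the loss in bag size needed to accommodate the pair-gadget; the hypothesis $t\geq 6$ guarantees that $s\geq 4$, the working range of \cref{NoChordal}.

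The main obstacle, and the step I would spend the most care on, is the gadget design and the reduction step. The subtlety is that the complement of a quotient graph is \emph{not} the quotient of the complementary graph: non-adjacency between two parts means that \emph{no} cross pair is adjacent, which is a much stronger condition than the existence of some non-adjacent cross pair. Consequently, bare complementation is insufficient; the gadget must be engineered so that non-adjacencies in the quotient of $G$ faithfully record adjacencies in the quotient of a \emph{separate} copy of the chordal construction, allowing odd antiholes in $G/\PP$ to be translated into forbidden chordless cycles in $G_0/\PP^-$. Once this translation is established, the rest of the argument proceeds exactly as in \cref{NoChordal}.
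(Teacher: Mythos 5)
Your proposal takes a fundamentally different and substantially more complicated route than the paper, and the key step is missing. The paper does not use the Strong Perfect Graph Theorem at all. It only uses the elementary fact that $C_5$ is not perfect (since $\chi(C_5)=3>2=\omega(C_5)$), and the proof of \cref{NoPerfect} is a small modification of the chordal argument: in \cref{NoPerfectLemma}, the gadget $B_\CC$ that was a $K_{k+1}$ in \cref{ChordalWork} is replaced by two copies of $K_{k+1}$ glued at a single vertex (so $|V(B_\CC)|=2k+1$), and the ambient graph is built over $A=G(k-1,2k+1)$ instead of $G(k-1,k+1)$. The $(k-1)$-cliques $C_1,\dots,C_{2k+1}$ are matched to the $2k+1$ vertices of $B_\CC$; when no part of $\PP$ absorbs $B_\CC$, one finds parts $Q,R$ hitting the two cliques of $B_\CC$ and the shared vertex lies in a third part $P$, producing an induced $5$-cycle $(Q,P,R,P_j,P_i)$ in the quotient, contradicting perfection. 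The arithmetic $\tfrac23(k^3-k)+(r-1)k+6$ then gives the $\lfloor(\tfrac32 t-8)^{1/3}\rfloor$ bound.

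By contrast, your plan of "complementary doubling" plus the SPGT is not a proof: the central object, the pair-gadget that supposedly translates odd antiholes in $G/\PP$ into chordless cycles in a complementary quotient, is never actually constructed, and you yourself identify this as the main obstacle. Moreover, the idea as stated cannot work with the claimed tree-width. If $v^-$ "inherits the adjacencies of $v$" in anything resembling the complement of $G_0$, then that half of $G$ already has tree-width close to $|V(G_0)|$, not $O(s)$, so the bound "tree-width at most $2s-1$" fails outright. Your diagnosis of the obstacle (that complementation does not commute with taking quotients) is accurate, but the correct response, as the paper shows, is not to try to control antiholes separately; it is simply to force a $C_5$, which kills both chordality and perfection in one stroke and makes the entire SPGT machinery unnecessary.
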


\cref{NoChordal,NoPerfect} say that it is hopeless to improve on the $O(t\sqrt{\log t})$ bound for the chromatic number of $K_t$-minor-free graphs using chordal or perfect partitions directly. Indeed, the best possible upper bound on the chromatic number using the above approach would be $O(t^{4/3})$ (since the quotient is $t$-colourable, and the best possible upper bound on the chromatic number of the parts would be $O(t^{1/3})$.)\ 

What about using an even larger class of partitions?  Chordal graphs contain no  4-cycle, and perfect graphs contain no 5-cycle. These are the only properties of chordal and perfect graphs used in the proofs of \cref{NoChordal,NoPerfect}. Thus the following result is a qualitative generalisation of both \cref{NoChordal,NoPerfect}. It says that there is no hereditary class of graphs for which the above colouring strategy works.

\begin{thm}
\label{General}
For every integer $k\geq 1$ and graph $H$, there is a graph $G$, such that for every connected partition $\PP$ of $G$, either some part of $\PP$ contains $K_k$ or the quotient $G/\PP$ contains $H$. 
\end{thm}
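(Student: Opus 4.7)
The plan is to prove \cref{General} by a structured blow-up construction combined with a Ramsey-type pigeonhole argument, extending the strategy hinted at by the discussion of \cref{NoChordal,NoPerfect} (where the relevant feature of chordal / perfect graphs is simply the absence, as induced subgraph, of a fixed obstruction $C_4$ or $C_5$).

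Fix $k\geq 1$ and let $n=|V(H)|$. I plan to take $G$ to be a blow-up of $H$: replace each vertex $v\in V(H)$ by a clique $\widetilde v$ of size $N$ (for $N$ sufficiently large in terms of $k$ and $n$), with all edges between $\widetilde v$ and $\widetilde w$ present if and only if $vw\in E(H)$. If a single blow-up turns out to be insufficient, the plan is to iterate this construction or to augment it with small ``gadget'' vertices (for instance pendant copies of $K_{k-1}$ attached to each $\widetilde v$).

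Now let $\PP$ be a connected partition of $G$ with no part containing $K_k$. Since each $\widetilde v$ is a clique of size $N$, any part $P$ meets $\widetilde v$ in a $G$-clique, hence in at most $k-1$ vertices (otherwise $P$ would already contain $K_k$). So at least $\lceil N/(k-1)\rceil$ distinct parts meet each $\widetilde v$. For a part $P$ write $S_P=\{v\in V(H):P\cap\widetilde v\neq\emptyset\}$ for its \emph{support}. The crucial step is to find, for each $v_i\in V(H)$, a part $P_i$ with $S_{P_i}=\{v_i\}$. Given such $P_1,\dots,P_n$, for $i\neq j$ one has $P_i$ adjacent to $P_j$ in $G/\PP$ if and only if $G$ has an edge between $\widetilde v_i$ and $\widetilde v_j$, which happens if and only if $v_iv_j\in E(H)$. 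Hence $\{P_1,\dots,P_n\}$ induces a copy of $H$ in $G/\PP$, finishing the proof.

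The main obstacle is precisely this last step: a naive single blow-up does not automatically force singleton supports. For example, when $H$ is a star with centre $c$, an adversary can pair each vertex of $\widetilde c$ with a distinct vertex of a leaf-clique, so that every part has support of size two and no part has support $\{c\}$. To overcome this I intend either to attach to each $\widetilde v$ a private ``localising'' gadget (say pendant $K_{k-1}$'s, or a private twin-set) that guarantees at least one part of $\PP$ lies entirely inside $\widetilde v$, or else to iterate the blow-up and extract singleton-support parts at a fine scale via a Ramsey/pigeonhole argument across the levels of the iteration. Designing this gadget so as to avoid introducing new $K_k$ subgraphs outside the large cliques, and simultaneously avoid creating unwanted adjacencies in the quotient, is the core technical difficulty I anticipate.
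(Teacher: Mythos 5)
Your plan does not yet constitute a proof, and you are right to flag the last step as the key difficulty --- but the gap is substantial rather than technical. The single blow-up of $H$ genuinely fails, not merely the particular argument via singleton supports. Concretely, take $H=C_4$ with vertices $v_1,v_2,v_3,v_4$ in cyclic order and $k=3$. In the blow-up, pair each vertex of $\widetilde v_1$ with a distinct vertex of $\widetilde v_2$ and each vertex of $\widetilde v_3$ with a distinct vertex of $\widetilde v_4$; every part is then a $K_2$, so no part contains $K_3$, yet every two parts are adjacent in the quotient (any two parts either share a vertex in some $\widetilde v_i$ or meet both $\widetilde v_1$ and $\widetilde v_4$, or both $\widetilde v_2$ and $\widetilde v_3$). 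The quotient is therefore a complete graph, which contains no induced $C_4$. (Note also that your star example is not a counterexample to the theorem or even to the blow-up construction: the paired parts are universal in the quotient, so one of them can serve as the centre of the star together with leaf-supported singleton parts.)

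The gadgets you mention do not repair this. Attaching a pendant $K_{k-1}$ to each vertex of $\widetilde v$, or a private stable twin-set $T_v$ joined to $\widetilde v$, only adds parts that attach to the complete quotient as pendant vertices or stable modules, and one can check that the resulting quotient still has no induced $C_4$. More generally, because the quotient of a blow-up can collapse to a clique (or near-clique), any gadget must do more than "localise" parts inside one $\widetilde v$ --- it must also propagate the required non-adjacencies, which is precisely what is hard. The paper's proof takes a rather different route: a double induction (on $k+|V(H)|$, then on an auxiliary parameter $r$) in which $G(k,t,1)$ is built from $A=G(k-1,t,2^n)$ and $B=G(k,t-1,1)$ by attaching a copy of $B$ to every family of $2^n$ disjoint $(k-1)$-cliques in $A$ whose union is a clique, with the $i$-th clique joined to the vertices of $B$ indexed by the $i$-th subset $S^i\subseteq V(B)$. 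Indexing attachments by \emph{all} subsets of $V(B)$ is exactly what lets one extend an $(|V(H)|-1)$-vertex pattern found recursively inside an attached copy of $B$ by one further vertex with a prescribed neighbourhood, and it is this recursive mechanism, rather than a Ramsey/pigeonhole argument over a blow-up, that makes the proof work.
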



Before presenting the proofs, we mention some applications of chordal partitions and related topics. Chordal partitions have proven to be a useful tool in the study of the following topics for $K_{t+1}$-minor-free graphs: cops and robbers pursuit games \citep{Andreae86},  fractional colouring \citep{ReedSeymour-JCTB98,KawaReed08}, generalised colouring numbers \citep{HOQRS17}, and defective and clustered colouring \citep{vdHW}. These papers show that every graph with no $K_{t+1}$ minor has a chordal partition in which each part has desirable properties. For example, in \citep{ReedSeymour-JCTB98}, each part has a stable set on at least half the vertices, and in \citep{vdHW}, each part has maximum degree $O(t)$ and is 2-colourable with monochromatic components of size $O(t)$. 

Several papers \citep{DMW05,KP-DM08,Wood-JGT06} have shown that graphs with tree-width $k$ have chordal partitions in which the quotient is a tree, and each part induces a subgraph with tree-width $k-1$, amongst other properties. Such partitions have been used for queue and track layouts \citep{DMW05} and non-repetitive graph colouring \citep{KP-DM08}. A \emph{tree partition} is a (not necessarily connected) partition of a graph whose quotient is a tree; these have also been widely studied \citep{Edenbrandt86,Halin91,Seese85,Wood09,DO95,DO96,BodEng-JAlg97,Bodlaender-DMTCS99}. Here the goal is to have few vertices in each part of the partition. For example, a referee of \citep{DO95} proved that every graph with tree-width $k$ and maximum degree $\Delta$ has a tree partition with $O(k\Delta)$ vertices in each part. 

\section{Chordal Partitions: Proof of \cref{NoChordal}}

Let $\PP=\{P_1,\dots,P_m\}$ be a partition of a graph $G$, and let $X$ be an induced subgraph of $G$. Then the \emph{restriction} of $\PP$ to $X$ is the partition of $X$ defined by $$\RP{X} := \{G[V(P_i)\cap V(X)]:i\in\{1,\dots,m\},V(P_i)\cap V(X)\neq\emptyset\}.$$ Note that the restriction of a connected partition to a subgraph need not be connected. The following lemma gives a scenario where the restriction is connected. 

\begin{lem}
\label{InducedPartition}
Let $X$ be an induced subgraph of a graph $G$, such that the neighbourhood of each component of $G-V(X)$ is a clique (in $X$). 
Let $\PP$ be a connected partition of $G$ with quotient $G/\PP$. Then $\RP{X}$ is a connected partition of $X$, 
and the quotient of $\RP{X}$ is the subgraph of $G/\PP$ induced by those parts that intersect $X$. 
\end{lem}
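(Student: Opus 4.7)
The plan is to verify the two assertions in turn: first that each non-empty intersection $G[V(P_i)\cap V(X)]$ is connected in $X$, and second that adjacency in the quotient of $\RP{X}$ exactly matches the induced subgraph of $G/\PP$ on the parts meeting $X$. The main tool throughout is the hypothesis that for every component $C$ of $G-V(X)$ the neighbourhood $N(C)$ is a clique of $X$, which lets us ``shortcut'' any detour into $G-V(X)$ by a single edge inside $X$.

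For connectivity, I fix a part $P_i$ with $V(P_i)\cap V(X)\neq\emptyset$ and two vertices $u,v\in V(P_i)\cap V(X)$. Since $P_i$ is connected, choose a path $W$ from $u$ to $v$ in $G[V(P_i)]$. I then walk along $W$ and decompose it into maximal ``excursions'' outside $X$: each excursion lies inside a single component $C$ of $G-V(X)$ and is bounded by two vertices $x,x'\in V(P_i)\cap V(X)$ that both lie in $N(C)$. Because $N(C)$ is a clique of $X$, either $x=x'$ or $xx'\in E(X)$, so replacing every excursion by this direct edge (or by a single vertex) converts $W$ into a walk from $u$ to $v$ that stays in $V(P_i)\cap V(X)$. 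This gives the required connectivity.

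For the quotient claim, one direction is immediate: any edge of $X$ between $V(P_i)\cap V(X)$ and $V(P_j)\cap V(X)$ is also an edge of $G$ between $V(P_i)$ and $V(P_j)$, so adjacency in $X/\RP{X}$ implies adjacency in $G/\PP$. For the converse, I take parts $P_i,P_j$ that both meet $X$ and are adjacent in $G/\PP$, witnessed by some edge $ab\in E(G)$ with $a\in V(P_i)$ and $b\in V(P_j)$, and I split into three cases according to whether $a,b$ lie in $V(X)$. If both do, $ab$ itself is an edge of $X$. If exactly one of them, say $b$, is outside $X$, then $b$ lies in some component $C$ of $G-V(X)$ with $a\in N(C)$; using that $P_j$ is connected and meets $X$, I walk from $b$ inside $P_j$ and take the first vertex $b''\in V(X)$ reached, which necessarily lies in $N(C)\cap V(P_j)$, so $ab''\in E(X)$ by the clique property. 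If neither $a$ nor $b$ lies in $V(X)$, they lie in the same component $C$ of $G-V(X)$, and the same ``walk until first hit of $X$'' trick inside $P_i$ and $P_j$ produces $a'\in V(P_i)\cap V(X)\cap N(C)$ and $b'\in V(P_j)\cap V(X)\cap N(C)$, whence $a'b'\in E(X)$ again by the clique property.

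The only step requiring care is the converse direction of the quotient claim, since one must ensure that the endpoints produced inside $X$ belong to the correct parts; this is automatic because each auxiliary path is chosen inside $P_i$ or $P_j$. Putting the two parts together gives that $\RP{X}$ is a connected partition of $X$ whose quotient is exactly the subgraph of $G/\PP$ induced by those parts meeting $X$.
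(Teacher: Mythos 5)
Your proof is correct and uses the same key idea as the paper: the clique condition on component neighbourhoods lets you shortcut any detour through $G-V(X)$ by a single edge of $X$. The paper packages the argument a bit more compactly, proving one observation (for every connected subgraph $G'$ of $G$ meeting $X$, the induced subgraph on $V(G')\cap V(X)$ is connected, shown via a shortest-path argument) and then applying it twice --- once with $G'$ a part $P_i$ to get connectivity of $\RP{X}$, and once with $G'=G[V(P)\cup V(Q)]$ for adjacent parts $P,Q$ to get the quotient edge --- thereby avoiding your three-way case analysis on whether $a$ and $b$ lie in $X$. Both routes are sound; yours is a little more explicit, the paper's a little slicker, but the underlying mechanism is identical.
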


\begin{proof}
We first prove that for every connected subgraph $G'$ of $G$, if $V(G')\cap V(X)\neq\emptyset$, then $G'[V(G')\cap V(X)]$ is connected. Consider non-empty sets $A,B$ that partition $V(G')\cap V(X)$. Let $P$ be a shortest path from $A$ to $B$ in $G'$. Then no internal vertex of $P$ is in $V(X)$. If $P$ has an internal vertex, then all its interior belongs to one component $C$ of $G-V(X)$, implying the endpoints of $P$ are in the neighbourhood of $C$ and are therefore adjacent, a contradiction. Thus $P$ has no interior, and hence $G'[V(G')\cap V(X)]$ is connected. 

Apply this observation with each part of $\PP$ as $G'$. It follows that $\RP{X}$ is a connected partition of $X$.  Moreover, if adjacent parts $P$ and $Q$ of $\PP$ both intersect $X$, then by the above observation with $G'=G[V(P)\cup V(Q)]$, there is an edge between $V(P)\cap V(X)$ and $V(Q)\cap V(X)$. Conversely, if there is an edge between $V(P)\cap V(X)$ and $V(Q)\cap V(X)$ for some parts $P$ and $Q$ of $\PP$, then $PQ$ is an edge of $G/\PP$. Thus the quotient of $\RP{X}$ is the subgraph of $G/\PP$ induced by those parts that intersect $X$. 
\end{proof}

The next lemma with $r=1$ implies \cref{NoChordal}. To obtain the second part of \cref{NoChordal} apply \cref{ChordalWork} with $k=\floor{(3t-11)^{1/3}}$, in which case $s(k,1)\leq t$. 

\begin{lem}
\label{ChordalWork}
For all integers $k\geq 1$ and $r\geq 1$, if 
$$s(k,r):=\tfrac13(k^3-k) +(r-1)k + 4 , $$
then there is a graph $G(k,r)$ with tree-width at most $s(k,r)-1$ (and thus with no $K_{s(k,r)+1}$-minor),   
such that for every chordal partition $\PP$ of $G$, either:\\
(1) $G$ contains a $K_{kr}$ subgraph intersecting each of $r$ distinct parts of $\PP$ in $k$ vertices, or\\
(2) some part of $\PP$  contains $K_{k+1}$.
\end{lem}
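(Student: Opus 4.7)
My plan is an induction on $r$ (with the base case $r=1$ treated via a further induction on $k$). The $k=1$ case is trivial: take $G(1,r) := K_r$, which has tree-width $r-1 \leq s(1,r)-1 = r+2$; any chordal partition of $K_r$ either has a part of size $\geq 2$ (which contains $K_2$, giving~(2)) or consists only of singletons (and $K_r$ itself hits $r$ parts in $1$ vertex each, giving~(1)). For $k \geq 2$ I proceed by induction on $r$.

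For the inductive step $r-1 \to r$ with $k \geq 2$, I would construct $G(k,r)$ by taking one or more disjoint copies of $H := G(k,r-1)$ and joining them to a common new $K_k$-clique $Q$ (so every vertex of $Q$ is adjacent to every vertex of every copy of $H$). Adding $Q$ to every bag of a tree-decomposition of $H$ of width $s(k,r-1)-1$ and gluing decompositions across copies gives one of $G(k,r)$ of width $s(k,r-1)-1+k = s(k,r)-1$, as required. Given a chordal partition $\PP$ of $G(k,r)$ with no part containing $K_{k+1}$, any part containing all of $Q$ must equal $Q$ itself (else it contains $Q$ plus an $H$-vertex, forming $K_{k+1}$). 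Consider first the case where some part $P^* = Q$: then for any fixed copy $C$ of $H$, every other part meeting $V(C)$ is entirely contained in $V(C)$ (since parts are connected and copies of $H$ interact only through $Q$), forming a chordal partition of $C \cong H$ (connectedness is preserved using only $H$-edges, and the quotient is an induced subgraph of the chordal quotient $G(k,r)/\PP$). By the inductive hypothesis applied to $C$, either some part contains $K_{k+1}$ (a contradiction) or $C$ contains a $K_{k(r-1)}$ hitting $r-1$ parts in $k$ vertices each; combined with $P^* = Q$, joined to all of $C$, this yields the required $K_{kr}$ in $r$ parts, satisfying~(1).

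The main obstacle is the case in which $Q$ is split across two or more parts of $\PP$. Here the restriction of $\PP$ to a single copy of $H$ may fail to be a connected partition (a part intersecting both $Q$ and $V(H)$ can lose internal connectivity once its $Q$-vertices are removed), so the inductive hypothesis does not apply directly. My plan is to use multiple copies of $H$ in the construction to force a reduction to the previous case: by \cref{InducedPartition} with $X := V(C) \cup V(Q)$ for any fixed copy $C$, the components of $G(k,r) - X$ are the remaining copies, each having neighborhood exactly $V(Q)$ (a clique in $X$), so $\RP{X}$ is a chordal partition of $X = C + Q$ whose quotient is induced from the chordal quotient $G(k,r)/\PP$. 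Iterating across copies together with a pigeonhole argument using chordality of $G(k,r)/\PP$ to constrain the possible splits of $Q$ should yield some copy in which $Q$ consolidates into a single part of the restricted partition, reducing to the first case.

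Finally, for the base case $r=1$ with $k \geq 2$, I would exploit the arithmetic identity $s(k,1) = s(k-1,k) + (k-1)$: construct $G(k,1)$ from $G(k-1,k)$ (available by the $r$-induction at the smaller value of $k$) by joining a new $K_{k-1}$-clique. The inductive hypothesis applied to $G(k-1,k)$ either forces a part to contain $K_k$ directly (done) or provides a $K_{(k-1)k}$ hitting $k$ parts in $k-1$ vertices each; combined with the new $K_{k-1}$-clique (joined to all of $G(k-1,k)$), the resulting structure forces some part to contain $K_k$, as required.
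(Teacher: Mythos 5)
Your proposal takes a genuinely different route from the paper, but both of your key constructions are broken, and the point at which you acknowledge uncertainty is exactly where the argument collapses.

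\textbf{The $r-1\to r$ step fails.} You join disjoint copies of $H:=G(k,r-1)$ to a single fresh $k$-clique $Q$ and hope that chordality ``consolidates'' $Q$ into one part. It does not, and there is no pigeonhole to rescue it. Concretely, take $k=2$, $r=2$, and note that $C_5$ is a valid $G(2,1)$ (singleton partitions have quotient $C_5$, so every chordal partition has a part with two adjacent vertices; tree-width $2\leq s(2,1)-1$). Your $G(2,2)$ is then (one or more copies of) $C_5$ joined to $K_2=\{a,b\}$. With vertices $1,\dots,5$ on the cycle, take the connected partition $P_1=\{a,1\}$, $P_2=\{b,3\}$, $P_3=\{2\}$, $P_4=\{4,5\}$ (and, if you use extra copies of $C_5$, partition each into $\{1',2'\},\{3'\},\{4',5'\}$). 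The quotient on $P_1,\dots,P_4$ is $K_4$ minus the edge $P_3P_4$, and each extra copy contributes a triangle, all complete to the universal vertices $P_1,P_2$; this quotient is chordal. But no part has three vertices, so outcome~(2) fails, and no two parts jointly span a $K_4$ ($1$ and $3$ are non-adjacent in $C_5$, $3$ and $5$ are non-adjacent, etc., and distinct copies are non-adjacent), so outcome~(1) fails. Your hoped-for consolidation never occurs: in the restriction to any copy plus $Q$, the set $Q$ remains split across $P_1$ and $P_2$, and the quotient stays chordal. Chordality alone cannot force $Q$ into one part.

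\textbf{The $(k,1)$ base case also fails.} You propose $G(k,1):=G(k-1,k)+K_{k-1}$. For $k=2$ this is $G(1,2)+K_1=K_2+K_1=K_3$, which is chordal; the singleton partition is a chordal partition of $K_3$ with no part containing $K_2$ or $K_3$, so neither outcome holds. The arithmetic identity $s(k,1)=s(k-1,k)+(k-1)$ is correct, but it does not license this construction. Moreover your appeal to ``the inductive hypothesis applied to $G(k-1,k)$'' is not available: after joining $K_{k-1}$ completely, the neighbourhood of the new clique in $G(k-1,k)$ is all of $V(G(k-1,k))$ rather than a clique, so \cref{InducedPartition} does not apply and the restriction to $G(k-1,k)$ need not be a connected partition.

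\textbf{What you are missing.} The paper does not join the attached gadget to a single fresh clique whose placement you cannot control. For the $(k,r)\to(k,r+1)$ step it takes $A:=G(k,1)$ and, for \emph{every} $k$-clique $C$ of $A$, attaches a disjoint copy $B_C$ of $G(k,r)$ complete to $C$. The $(k,1)$ case then supplies a $k$-clique $C$ entirely inside a single part $P$; since $B_C$ meets the rest of $G$ only through $C\subseteq P$, the restriction to $B_C$ is guaranteed connected, so the induction can be applied inside $B_C$. This ``attach one gadget per potential witness clique'' idea is what lets the induction close. For the $(k,1)$ case, the paper starts from $G(k-1,k+1)$ (not $G(k-1,k)$) and attaches, for every collection $\CC$ of $k+1$ disjoint $(k-1)$-cliques whose union is complete, a new $K_{k+1}$ whose $i$-th vertex is joined to $C_i$; the point is that if no part contains $K_k$ then two vertices of the attached $K_{k+1}$ lie in distinct parts $Q,R$, and together with the parts $P_i,P_j$ containing $C_i,C_j$ one exhibits an induced $C_4$ in the quotient. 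Chordality is used to derive this contradiction, not to shepherd a fixed clique into a single part. Your construction omits both ingredients and, as the examples show, cannot be repaired merely by adding more copies.
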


\begin{proof}
Note that $s(k,r)$ is the upper bound on the size of the bags in the tree-decomposition of $G(k,r)$ that we construct. We proceed by induction on $k$ and then $r$. 
When $k=r=1$, the graph with one vertex satisfies (1) for every chordal partition and has a tree-decomposition with one bag of size $1<s(1,1)$.

First we prove that the $(k,1)$ and $(k,r)$ cases imply the $(k,r+1)$ case. Let $A:=G(k,1)$ and $B:=G(k,r)$. Let $G$ be obtained from $A$ as follows. For each $k$-clique $C$ in $A$, add a  copy $B_C$ of $B$  (disjoint from the current graph), where $C$ is complete to $B_C$. We claim that $G$ has the claimed properties of $G(k,r+1)$.

By assumption, in every chordal partition of $A$ some part contains $K_k$, $A$ has a tree-decomposition with bags of size at most $s(k,1)$, and for each $k$-clique $C$ in $A$, there is a tree-decomposition of $B_C$ with bags of  size at most $s(k,r)$. For every tree-decomposition of a graph and for each clique $C$, there is a bag containing $C$. Add an edge between the node corresponding to a bag containing $C$ in the tree-decomposition of $A$ and any node of the tree in the tree-decomposition of $B_C$, and add $C$ to every bag of the tree-decomposition of $B_C$. We obtain a tree-decomposition of $G$ with bags of  size at most $\max\{s(k,1),s(k,r)+k\} = s(k,r)+k = s(k,r+1)$, as desired. 

Consider a chordal partition $\PP$ of $G$. By \cref{InducedPartition}, $\RP{A}$ is a connected partition of $A$, and the quotient of $\RP{A}$ equals the subgraph of $G/\PP$ induced by those parts that intersect $A$. Since $G/\PP$ is chordal, the quotient of  $\RP{A}$ is chordal. Since $A=G(k,1)$, by induction, $\RP{A}$ satisfies (1) with $r=1$ or (2). If outcome (2) holds, then some part of $\PP$ contains $K_{k+1}$ and outcome (2) holds for $G$. 

Now assume that $\RP{A}$ satisfies outcome (1) with $r=1$; that is, some part $P$ of $\PP$ contains some $k$-clique $C$ of $A$. If some vertex of $B_C$ is in $P$, then $P$ contains $K_{k+1}$ and outcome (2) holds for $G$. Now assume that no vertex of $B_C$ is in $P$. Since each part of $\PP$ is connected, the parts of $\PP$ that intersect $B_C$ do not intersect $G-V(B_C)$. Thus, $\RP{B_C}$ is a connected partition of $B_C$, and the quotient of $\RP{B_C}$ equals the subgraph of $G/\PP$ induced by those parts that intersect $B_C$, and is therefore chordal. Since $B=G(k,r)$, by induction, $\RP{B_C}$ satisfies (1) or (2). If outcome (2)  holds, then the same outcome holds for $G$. Now assume that outcome (1) holds for $B_C$. Thus $B_C$ contains a $K_{kr}$ subgraph intersecting each of $r$ distinct parts of $\PP$ in $k$ vertices. None of these parts are $P$. Since $C$ is complete to $B_C$,  $G$ contains a $K_{k(r+1)}$ subgraph intersecting each of $r+1$ distinct parts of $\PP$ in $k$ vertices, and outcome (1) holds for $G$. Hence $G$ has the claimed properties of $G(k,r+1)$. 


It remains to prove the $(k,1)$ case for $k\geq 2$. By induction, we may assume the $(k-1,k+1)$ case. Let $A:=G(k-1,k+1)$. As illustrated in \cref{FirstConstruction}, let $G$ be obtained from $A$ as follows: for each set $\CC=\{C_1,\dots,C_{k+1}\}$ of pairwise-disjoint $(k-1)$-cliques in $A$, whose union induces $K_{(k-1)(k+1)}$, add a $K_{k+1}$ subgraph $B_{\CC}$ (disjoint from the current graph), whose $i$-th vertex is adjacent to  every vertex in $C_i$. We claim that $G$ has the claimed properties of $G(k,1)$.

\begin{figure}
\centering
\includegraphics{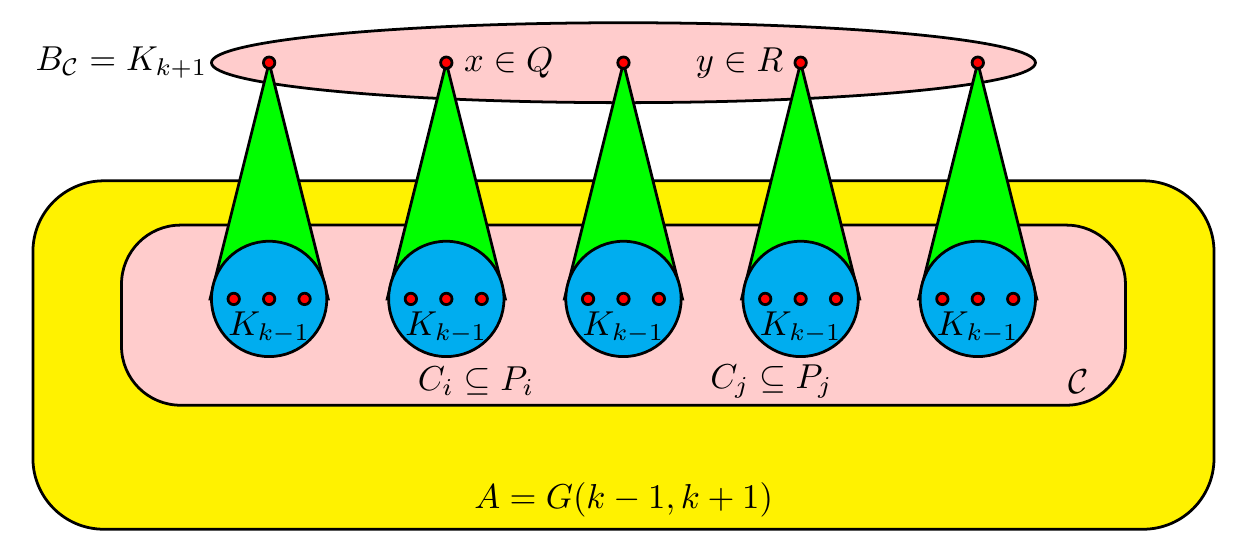}
\caption{Construction of $G(k,1)$ in \cref{ChordalWork}. 
\label{FirstConstruction}}
\end{figure}

By assumption, $A$ has a tree-decomposition with bags of size at most $s(k-1,k+1)$. For each set $\CC=\{C_1,\dots,C_{k+1}\}$ of pairwise-disjoint $(k-1)$-cliques in $A$, whose union induces $K_{(k-1)(k+1)}$, choose a node $x$ corresponding to a bag of the tree-decomposition of $A$ containing $C_1\cup\dots\cup C_{k+1}$, and add a new node adjacent to $x$ with corresponding bag $V(B_{\CC})\cup C_1\cup\dots\cup C_{k+1}$. We obtain a tree-decomposition of $G$ with bags of  size at most $\max\{s(k-1,k+1),(k+1)k\} = s(k-1,k+1)=s(k,1)$, as desired. 

Consider a chordal partition $\PP$ of $G$. By \cref{InducedPartition}, $\RP{A}$ is a connected partition of $A$ and the quotient of $\RP{A}$ equals the subgraph of $G/\PP$ induced by those parts that intersect $A$, and is therefore chordal. Since $A=G(k-1,k+1)$, by induction,  $\RP{A}$ satisfies (1) or (2). If outcome (2) holds for $\RP{A}$, then some part of $\PP$ contains $K_{k}$ and outcome (1) holds for $G$ (with $r=1$). Now assume that outcome (1) holds for $\RP{A}$. Thus $A$ contains a $K_{(k-1)(k+1)}$ subgraph intersecting each of $k+1$ distinct parts $P_1,\dots,P_{k+1}$ of $\PP$ in $k-1$ vertices. Let $C_i$ be the corresponding $(k-1)$-clique in $P_i$. Let $\CC:=\{C_1,\dots,C_{k+1}\}$ and $\widehat{\CC}:= C_1\cup\dots\cup C_{k+1}$. 

If for some $i\in\{1,\dots,k+1\}$, the neighbour of $C_i$ in $B_{\CC}$ is in $P_i$, 
then $P_i$ contains $K_{k}$ and outcome (1) holds for $G$. 
Now assume that for each $i\in\{1,\dots,k+1\}$, the neighbour of $C_i$ in $B_{\CC}$ is not in $P_i$. 
Suppose that some vertex $x$ in $B_{\CC}$ is in $P_i$ for some $i\in\{1,\dots,k+1\}$. 
Then since $P_i$ is connected, there is a path in $G$ 
between $C_i$ and $x$ avoiding the neighbourhood of $C_i$ in $B_{\CC}$. 
Every such path intersects $\widehat{\CC}\setminus C_i$, but none of these vertices are in $P_i$. 
Thus, no vertex in  $B_{\CC}$ is in $P_1\cup\dots\cup P_{k+1}$. 
If $B_{\CC}$ is contained in one part, then outcome (2) holds. 
Now assume that there are  vertices $x$ and $y$ of $B_{\CC}$  in distinct parts $Q$ and $R$ of $\PP$. 
Then $x$ is adjacent to every vertex in $C_i$ and $y$ is adjacent to every vertex in $C_j$, for some distinct $i,j\in\{1,\dots,k+1\}$. 
Observe that $(Q,R,P_j,P_i)$ is a 4-cycle in $G/\PP$. 
Moreover, there is no $QP_j$ edge in $G/\PP$
because $(\widehat{\CC}\setminus C_j)\cup\{y\}$ separates $x\in Q$ from $C_j\subseteq P_j$, 
and none of these vertices are in $Q\cup P_j$. 
Similarly, there is no $RP_i$ edge in $G/\PP$.
Hence $(Q,R,P_j,P_i)$ is an induced 4-cycle in $G/\PP$, 
which contradicts the assumption that $\PP$ is a chordal partition. 
Therefore $G$ has the claimed properties of $G(k,1)$. 
\end{proof}

\section{Perfect Partitions: Proof of \cref{NoPerfect}}

The following lemma with $r=1$  implies \cref{NoPerfect}. 
To obtain the second part of \cref{NoPerfect} apply \cref{ChordalWork} with $k=\floor{(\frac{3}{2}t-8)^{1/3}}$, in which case $t(k,1)\leq t$. 
The proof is very similar to \cref{ChordalWork} except that we force $C_5$ in the quotient instead of $C_4$. 


\begin{lem}
\label{NoPerfectLemma}
For all integers $k\geq 1$ and $r\geq 1$, if 
$$t(k,r):=\tfrac23 (k^3-k) + (r-1)k + 6, $$
then there is a graph $G(k,r)$ with tree-width at most $t(k,r)-1$ (and thus with no $K_{t(k,r)+1}$-minor),  
such that for every perfect partition $\PP$ of $G$, either:\\
(1) $G$ contains a $K_{kr}$ subgraph intersecting each of $r$ distinct parts of $\PP$ in $k$ vertices, or\\
(2) some part of $\PP$ contains $K_{k+1}$.
\end{lem}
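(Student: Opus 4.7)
The plan is to mirror the proof of \cref{ChordalWork}, with two adjustments: the recursion increment becomes $(k-1, 2k+1) \Rightarrow (k,1)$ in place of $(k-1, k+1) \Rightarrow (k,1)$, and the gadget attached at each big clique is designed so that failure of outcomes (1) and (2) forces an induced $C_5$ (rather than an induced $C_4$) in $G/\PP$. Induction is on $k$, then on $r$. The base case $k = r = 1$ is trivial (a single vertex), and the step $\bigl((k,1)\text{ and }(k,r)\bigr) \Rightarrow (k,r+1)$ is identical to the chordal version, since perfect graphs -- like chordal graphs -- are closed under induced subgraphs, so \cref{InducedPartition} transfers perfect partitions to perfect partitions.

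For the key step $(k-1, 2k+1) \Rightarrow (k, 1)$ I would first verify the arithmetic identity $t(k,1) = t(k-1, 2k+1)$, which reduces to $\tfrac{2}{3}\bigl(k^3-(k-1)^3-1\bigr) = 2k(k-1)$. Take $A := G(k-1, 2k+1)$. For each family $\CC = \{C_1, \dots, C_{2k+1}\}$ of pairwise-disjoint $(k-1)$-cliques in $A$ whose union induces $K_{(k-1)(2k+1)}$, attach a gadget $B_\CC$ obtained by identifying one vertex $a_0$ of two disjoint copies of $K_{k+1}$: write $V(B_\CC) = \{a_0, a_1, \dots, a_k, b_1, \dots, b_k\}$, with $\{a_0, a_1, \dots, a_k\}$ and $\{a_0, b_1, \dots, b_k\}$ each inducing a $K_{k+1}$ and no $a_i b_j$ edges for $i, j \geq 1$. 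Make $a_0$ adjacent to every vertex of $C_{2k+1}$, each $a_i$ ($1 \leq i \leq k$) adjacent to every vertex of $C_i$, and each $b_j$ ($1 \leq j \leq k$) adjacent to every vertex of $C_{k+j}$. Then $|V(B_\CC)| = 2k+1$ and the tree-decomposition bag $V(B_\CC) \cup \widehat{\CC}$ has size $(2k+1)k$, which one checks is at most $t(k,1)$.

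Given a perfect partition $\PP$ of $G$, use \cref{InducedPartition} and the induction hypothesis on $A$ to reduce to the case where $\PP$ contains $2k+1$ parts $P_1, \dots, P_{2k+1}$ and $(k-1)$-cliques $C_i \subseteq P_i$ whose union is the big clique $\widehat{\CC}$. The case analysis on $B_\CC$ proceeds by: (i) if the unique $B_\CC$-neighbour of some $C_l$ lies in $P_l$, then $P_l \supseteq K_k$ and outcome (1) holds; (ii) otherwise, a connectivity argument identical to that in \cref{ChordalWork} -- using that every $B_\CC$-vertex has its $A$-neighbours confined to a single $C_l$ and that $B_\CC$ has no edges to $V(A) \setminus \widehat{\CC}$ -- shows that no $B_\CC$-vertex lies in any $P_l$ and that any part of $\PP$ meeting $B_\CC$ is entirely contained in $B_\CC$; (iii) since the only edges in $G$ between $\{a_1, \dots, a_k\}$ and $\{b_1, \dots, b_k\}$ pass through $a_0$, every connected subset of $B_\CC$ avoiding $a_0$ is contained in $\{a_1, \dots, a_k\}$ or in $\{b_1, \dots, b_k\}$; (iv) if one of the two $K_{k+1}$-copies lies entirely in one part, outcome (2) holds, and if either of the $K_k$-subcliques $\{a_1,\dots,a_k\}$ or $\{b_1, \dots, b_k\}$ lies entirely in a single part avoiding $a_0$, outcome (1) holds; (v) otherwise, $a_0$ lies in a part $S_0$, some $a_i$ lies in a part $X_5 \subseteq \{a_1,\dots,a_k\}$ with $X_5 \neq S_0$, some $b_j$ lies in a part $X_3 \subseteq \{b_1,\dots,b_k\}$ with $X_3 \neq S_0$, and automatically $X_5 \neq X_3$.

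In this last case the five parts $(X_5, S_0, X_3, P_{k+j}, P_i)$ form a $5$-cycle in $G/\PP$ whose cyclic edges come from $a_i a_0$, $a_0 b_j$, $b_j\text{--}C_{k+j}$, $C_{k+j}\text{--}C_i$ (inside $\widehat{\CC}$), and $C_i\text{--}a_i$. The non-chord $X_5 X_3$ is immediate because $X_5 \cup X_3 \subseteq \{a_1,\dots,a_k\} \cup \{b_1,\dots,b_k\}$ contains no $a_i b_j$ edge. The four remaining non-chords ($X_5 P_{k+j}$, $S_0 P_{k+j}$, $S_0 P_i$, $X_3 P_i$) are each verified by a separator argument modelled exactly on those in \cref{ChordalWork}: for each putative chord one exhibits a separator of the shape $(\widehat{\CC} \setminus C_l) \cup (V(B_\CC) \setminus V(X))$ that is disjoint from the union of the two parts in question but separates their $B_\CC$-representative from the relevant $C_l$ in $G$. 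The resulting induced $C_5$ contradicts perfectness of $G/\PP$ and completes the induction. The main obstacle is organising the case analysis in steps (iii)--(v) so that it is exhaustive and then verifying the four separator conditions in the final case; the latter are routine adaptations of those in \cref{ChordalWork}, now built on the two-$K_{k+1}$'s-glued-at-a-vertex gadget rather than a single $K_{k+1}$.
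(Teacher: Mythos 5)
Your proposal is correct and follows essentially the same route as the paper: the same gadget (two $K_{k+1}$'s sharing a vertex, with the $i$-th gadget vertex made complete to $C_i$), the same recursion from $(k-1,2k+1)$ to $(k,1)$, the same tree-width accounting, and the same induced-$C_5$ contradiction with the same family of separator arguments. The only cosmetic difference is that your step (iv) checks whether one of the $K_k$-halves $\{a_1,\dots,a_k\}$ or $\{b_1,\dots,b_k\}$ sits in a single part (yielding outcome (1) directly); the paper skips this, since even in that case the subsequent 5-cycle argument goes through and produces the contradiction, so the extra subcase is harmless but unnecessary.
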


\begin{proof}
Note that $t(k,r)$ is the upper bound on the size of the bags in the tree-decomposition of $G(k,r)$ that we construct. 
We proceed by induction on $k$ and then $r$. 
For the base case, the graph with one vertex satisfies (1) for $k=r=1$ and has a tree-decomposition with one bag of size $1< t(1,1)$. The proof that the $(k,1)$ and $(k,r)$ cases imply the $(k,r+1)$ case is identical to the analogous step in the proof in \cref{ChordalWork}, so we omit it. 

It remains to prove the $(k,1)$ case for $k\geq 2$. By induction, we may assume the $(k-1,2k+1)$ case. 
Let $A:=G(k-1,2k+1)$. Let $B$ be the graph consisting of two copies of $K_{k+1}$ with one vertex in common. Note that $|V(B)|=2k+1$. As illustrated in \cref{SecondConstruction}, let $G$ be obtained from $A$ as follows: for each set $\CC=\{C_1,\dots,C_{2k+1}\}$ of pairwise-disjoint $(k-1)$-cliques in $A$, whose union induces $K_{(k-1)(2k+1)}$, add a subgraph $B_{\CC}$ isomorphic to $B$ (disjoint from the current graph), whose $i$-th vertex is adjacent to every vertex in $C_i$. We claim that $G$ has the claimed properties of $G(k,1)$.

\begin{figure}
\centering
\includegraphics{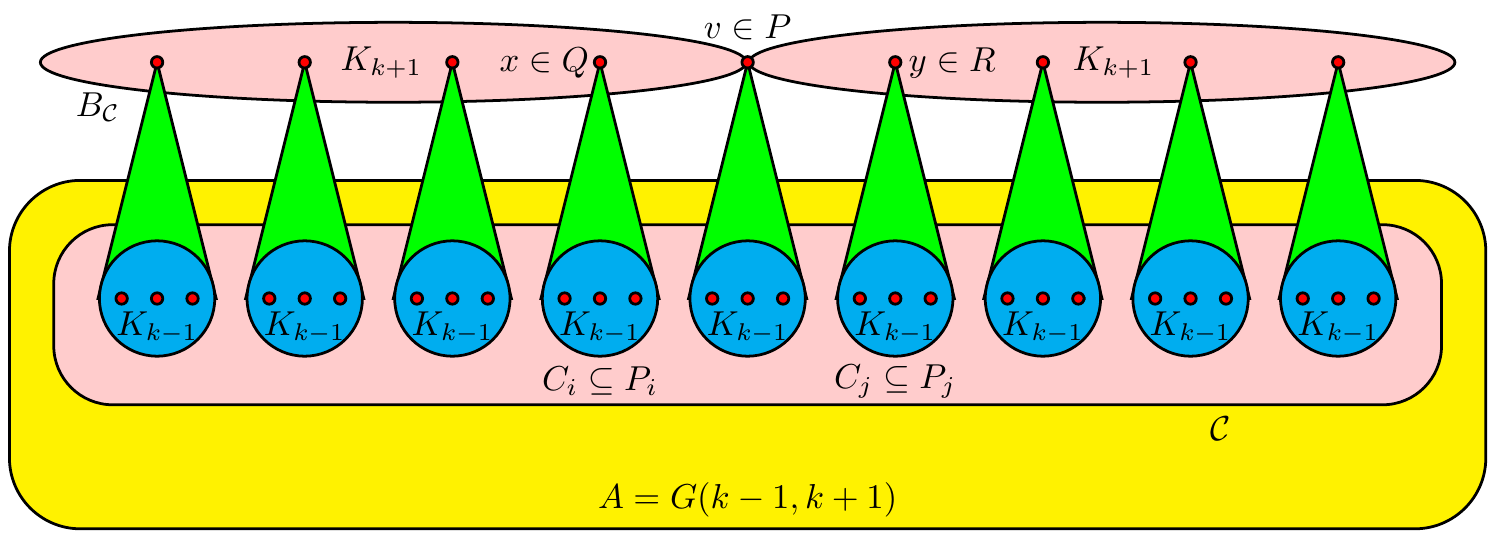}
\caption{Construction of $G(k,1)$ in \cref{NoPerfectLemma}. 
\label{SecondConstruction}}
\end{figure}

By assumption, $A$ has a tree-decomposition with bags of size at most $t(k-1,2k+1)$. For each set $\CC=\{C_1,\dots,C_{2k+1}\}$ of pairwise-disjoint $(k-1)$-cliques in $A$, whose union induces $K_{(k-1)(2k+1)}$, choose a node $x$ corresponding to a bag containing $C_1\cup\dots\cup C_{2k+1}$ in the tree-decomposition of $A$, and add a new node adjacent to $x$ with corresponding bag $V(B_{\CC})\cup C_1\cup\dots\cup C_{2k+1}$. We obtain a tree-decomposition of $G$ with bags of  size at most  $\max\{t(k-1,2k+1),(2k+1)k\} = t(k-1,2k+1)=t(k,1)$, as desired. 

%

Consider a perfect partition $\PP$ of $G$. By \cref{InducedPartition}, $\RP{A}$ is a connected partition of $A$ and the quotient of $\RP{A}$ equals the subgraph of $G/\PP$ induced by those parts that intersect $A$, and is therefore perfect. Recall that $A=G(k-1,2k+1)$. If outcome (2) holds for $\RP{A}$, then some part of $\PP$ contains $K_{k}$ and outcome (1) holds for $G$ (with $r=1$). Now assume that outcome (1) holds for $\RP{A}$. Thus $A$ contains a $K_{(k-1)(2k+1)}$ subgraph intersecting each of $2k+1$ distinct parts $P_1,\dots,P_{2k+1}$ of $\PP$ in $k-1$ vertices. Let $C_i$ be the corresponding $(k-1)$-clique in $P_i$. Let $\CC:=\{C_1,\dots,C_{2k+1}\}$ and $\widehat{\CC}:= C_1\cup\dots\cup C_{2k+1}$. 

If for some $i\in\{1,\dots,2k+1\}$, the neighbour of $C_i$ in $B_{\CC}$ is in $P_i$, 
then $P_i$ contains a $K_{k}$ subgraph and outcome (1) holds for $G$. 
Now assume that for each $i\in\{1,\dots,2k+1\}$, the neighbour of $C_i$ in $B_{\CC}$ is not in $P_i$. 
Suppose that some vertex $x$ in $B_{\CC}$ is in $P_i$ for some $i\in\{1,\dots,k+1\}$. 
Then since $P_i$ is connected, there is a path in $G$ between $C_i$ and $x$ avoiding the neighbourhood of $C_i$ in $B_{\CC}$. 
Every such path intersects $\widehat{\CC}\setminus C_i$, but none of these vertices are in $P_i$. 
Thus, no vertex in  $B_{\CC}$ is in $P_1\cup\dots\cup P_{2k+1}$. 

By construction,  $B_{\CC}$ consists of two $(k+1)$-cliques $B^1$ and $B^2$, intersecting in one vertex $v$. 
Say $v$ is in part $P$ of $\PP$. 
If $B^1 \subseteq V(P)$, then outcome (2) holds. 
Now assume that there is a vertex $x$ of $B^1$ in some part $Q$ distinct from $P$. 
Similarly, assume that there is a vertex $y$ of $B^2$ in some part $R$ distinct from $P$. 
Now, $Q\neq R$, since $\widehat{\CC}\cup\{v\}$ separates $x$ and $y$, 
and none of these vertices are in $Q\cup R$. 
By construction, $x$ is adjacent to every vertex in $C_i$ and $y$ is adjacent to every vertex in $C_j$, for some distinct $i,j\in\{1,\dots,2k+1\}$. 
Observe that $(Q,P,R,P_j,P_i)$ is a 5-cycle in $G/\PP$. 
Moreover, there is no $QP_j$ edge in $G/\PP$
because $(\widehat{\CC}\setminus C_j)\cup\{y\}$ 
separates $x\in Q$ from $C_j\subseteq P_j$, and none of these vertices are in $Q\cup P_j$. 
Similarly, there is no $RP_i$ edge in $G/\PP$.
There is no $PP_j$ edge in $G/\PP$
because $(\widehat{\CC}\setminus C_j)\cup\{y\}$ 
separates $v\in P$ from $C_j\subseteq P_j$, and none of these vertices are in $P\cup P_j$. 
Similarly, there is no $PP_i$ edge in $G/\PP$.  
Hence $(Q,P,R,P_j,P_i)$ is an induced 5-cycle in $G/\PP$, 
which contradicts the assumption that $\PP$ is a perfect partition. 
Therefore $G$ has the claimed properties of $G(k,1)$. 
\end{proof}

\section{General Partitions: Proof of \cref{General}}

To prove \cref{General} we show the following stronger result, in which $G$ only depends on $|V(H)|$.

\begin{lem}
\label{GeneralGeneral}
For all integers $k,t,r\geq 1$, there is a graph $G=G(k,t,r)$, such that for every connected partition $\PP$ of $G$ either:\\
(1) $G$ contains a $K_{kr}$ subgraph intersecting each of $r$ distinct parts of $\PP$ in $k$ vertices, or\\
(2) $G/\PP$ contains every $t$-vertex graph, or\\
(3) some part of $\PP$  contains $K_{k+1}$. 
\end{lem}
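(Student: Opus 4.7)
The plan is to prove \cref{GeneralGeneral} by induction on $k$, and then on $r$, following the same skeleton as \cref{ChordalWork} and \cref{NoPerfectLemma}. For the base case $k=1$, I take $G(1, t, r)$ to be any graph containing $K_r$ and every graph on at most $t$ vertices as an induced subgraph (for instance, a sufficiently large random graph). For any connected partition $\PP$, a part with at least two vertices is connected and hence contains $K_2$, giving outcome (3); otherwise all parts are singletons, $G/\PP = G$, and outcomes (1) and (2) both hold. The step $(k,t,1) \wedge (k,t,r) \Rightarrow (k,t,r+1)$ proceeds exactly as in \cref{ChordalWork}, the only new ingredient being that outcome (2) propagates: if the quotient of $\RP{A}$ or $\RP{B_C}$ contains every $t$-vertex graph as an induced subgraph, then so does $G/\PP$, because the former is an induced subgraph of the latter.

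The main step is $(k-1, t, T) \Rightarrow (k, t, 1)$ for a sufficiently large $T = T(k, t)$. To simplify the gadget design, I would reduce ``forcing every $t$-vertex graph in the quotient'' to ``forcing a single graph $H$'' by taking a disjoint union. For each $t$-vertex graph $H$ (finitely many up to isomorphism), construct $G^H(k,r)$ satisfying the per-$H$ variant of the lemma, where outcome (2) is replaced by ``$G^H/\PP$ contains $H$.'' Setting $G(k,t,r) := \bigsqcup_H G^H(k,r)$ works: a connected partition of the disjoint union restricts to each component, and if no component triggers outcome (1) or (3), then each component yields its $H$ in its quotient, so $G/\PP$ contains every $t$-vertex graph.

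For the per-$H$ construction, use the blueprint of \cref{ChordalWork}: take $A := G^H(k-1, T)$ with $T = |V(H)|$, and for each configuration $\CC = \{C_1, \dots, C_T\}$ of pairwise-disjoint $(k-1)$-cliques in $A$ whose union induces $K_{(k-1)T}$, attach a gadget $B_\CC$ encoding $H$ --- the natural candidate being the $K_{k+1}$-blow-up $H[K_{k+1}]$, in which each $h_i \in V(H)$ becomes a clique $D_i \cong K_{k+1}$, $D_i$ and $D_j$ are joined completely iff $h_i h_j \in E(H)$, and every vertex of $D_i$ is fully joined to $C_i$. The analysis begins identically to \cref{ChordalWork,NoPerfectLemma}: \cref{InducedPartition} propagates the three outcomes of $\RP{A}$; in the remaining case (outcome (1) for $\RP{A}$), either a vertex of some $D_i$ lies in $P_i$ (so $P_i \supseteq C_i \cup \{v\} \cong K_k$, giving outcome (1)), or by connectivity no vertex of $B_\CC$ lies in any $P_j$; if any $D_i$ lies in a single part, that part contains $K_{k+1}$ (outcome (3)); otherwise every $D_i$ is split across at least two parts.

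The main obstacle is this last case: one must select, for each $h_i \in V(H)$, a distinct part $Q_i$ intersecting $D_i$ such that the induced subgraph of $G/\PP$ on $\{Q_1, \dots, Q_t\}$ is isomorphic to $H$. If each selected $Q_i$ is contained in $D_i$, then the quotient edges among the $Q_i$'s come exactly from the $D_i$-$D_j$ join edges, matching $H$ precisely. The difficulty is that an adversarial partition may merge vertices across several $D_i$'s into a single part --- which remains connected in the blow-up and, if it picks at most $k$ vertices from each $D_i$, avoids $K_{k+1}$ --- collapsing the natural choice of representatives. I expect the resolution to require enlarging the blow-up parameter (replacing $K_{k+1}$ by $K_M$ for $M$ large) and invoking either Hall's theorem to produce a system of distinct representatives, or a Ramsey-style pigeonhole on the parts intersecting each $D_i$, to guarantee a selection of representative parts $Q_i$ that each lie inside a single $D_i$ and together induce $H$ in the quotient.
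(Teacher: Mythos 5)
Your skeleton is right where it matches the paper---the base cases, the use of \cref{InducedPartition}, and the $(k,t,1)\wedge(k,t,r)\Rightarrow(k,t,r+1)$ step all work. The reduction to a single target graph $H$ by taking a disjoint union over all $t$-vertex graphs is also fine. But the crux, the gadget for the $(k,1)$ step, is genuinely incomplete, and you say so yourself. The problem is not a technicality: the blow-up $H[K_M]$ fails for any $M$. Take two adjacent blobs $D_i,D_j$ and let the adversary partition $D_i\cup D_j$ into $M$ parts of the form $\{d_{i,\ell},d_{j,\ell}\}$; these are connected, have only two vertices (so no $K_{k+1}$ for $k\geq 2$), and not a single part lies inside one blob, so there is no system of representatives to find. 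Scaling $M$ up does not help because the adversary scales with you; Hall's theorem and Ramsey-type pigeonhole arguments operate on the parts you have, none of which has the property you need. So the fix you gesture at does not obviously exist within your framework.

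The paper sidesteps this entirely by recursing on $t$ as well as $k$. The attached gadget $B_{\CC}$ is not a blow-up of a fixed $H$ but a copy of $G(k,t-1,1)$, with $n=|V(B)|$, and the $(k-1)$-cliques $C_1,\dots,C_{2^n}$ of the configuration $\CC$ are matched to the $2^n$ distinct \emph{subsets} $S^1,\dots,S^{2^n}$ of $V(B)$, with $C_i$ made complete to $S^i_\CC$. Once no vertex of $B_\CC$ lies in any $P_i$, the restriction $\RP{B_\CC}$ is a connected partition of a copy of $G(k,t-1,1)$, so by induction its quotient contains every $(t-1)$-vertex graph. Given a target $H$ and a vertex $v$ of $H$, one finds $H-v$ on parts $Q_1,\dots,Q_{t-1}$ inside $B_\CC$; the union of the parts representing $N_H(v)$ is one of the subsets $S^i$, and then $P_i$ (which meets $B_\CC$ exactly in $S^i_\CC$) serves as the missing vertex $v$, giving an induced copy of $H$ in $G/\PP$. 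This ``attach a clique to every subset'' idea, combined with the double induction, is the ingredient your proposal is missing, and it avoids all of the representative-selection difficulties that sink the blow-up approach.
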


\begin{proof}
We proceed by induction on $k+t$ and then $r$. We first deal with two base cases. 
First suppose that $t=1$. Let $G:=G(k,1,r):=K_1$. Then for every partition $\PP$ of $G$, the quotient $G/\PP$ has at least one vertex, and (2) holds. Now assume that $t\geq 2$. 
Now suppose that $k=1$. Let $G:=G(1,t,r):=K_r$. Then for every connected partition $\PP$ of $G$, if some part of $\PP$  contains an edge, then (3) holds; otherwise each part is a single vertex, and (1) holds. 
Now assume that $k\geq 2$. 

The proof that the $(k,t,1)$ and $(k,t,r)$ cases imply the $(k,t,r+1)$ case is identical to the analogous step in the proof in \cref{ChordalWork}, so we omit it. 

It remains to prove the $(k,t,1)$ case for $k\geq 2$ and $t\geq 2$. 
By induction, we may assume the $(k,t-1,1)$ case and the $(k-1,t,r)$ case for all $r$. 
Let $B:=G(k,t-1,1)$ and $n:=|V(B)|$. 
Let $S^1,\dots,S^{2^n}$ be the distinct subsets of $V(B)$. 
Let $A:=G(k-1,t,2^n)$. 
Let $G$ be obtained from $A$ as follows: 
for each set $\CC=\{C_1,\dots,C_{2^n}\}$ of pairwise-disjoint $(k-1)$-cliques in $A$, whose union induces $K_{(k-1)2^n}$, 
add a copy $B_{\CC}$ of $B$  (disjoint from the current graph), where $C_i$ is complete to $S_{\CC}^i$ for all $i\in\{1,\dots,2^n\}$, 
where we write $S_{\CC}^i$ for the subset of $V(B_{\CC})$ corresponding to $S^i$.
We claim that $G$ has the claimed properties of $G(k,t,1)$.

Consider a connected partition $\PP$ of $G$. By \cref{InducedPartition}, $\RP{A}$ is a connected partition of $A$, and the quotient of $\RP{A}$ equals the subgraph of $G/\PP$ induced by those parts that intersect $A$. Recall that $A=G(k-1,t,2^n)$. If  $\RP{A}$ satisfies outcome (2), then the quotient of $\RP{A}$ contains every $t$-vertex graph and outcome (2) is satisfied for $G$. If outcome (3) holds for $\RP{A}$, then some part of $\PP$ contains $K_{k}$ and outcome (1) holds for $G$ (with $r=1$). Now assume that outcome (1) holds for $\RP{A}$. Thus $A$ contains a $K_{(k-1)2^n}$ subgraph intersecting each of $2^n$ distinct parts $P_1,\dots,P_{2^n}$ of $\PP$ in $k-1$ vertices. Let $C_i$ be the corresponding $(k-1)$-clique in $P_i$. Let $\CC:=\{C_1,\dots,C_{2^n}\}$. 

If for some $i\in\{1,\dots,2^n\}$, some neighbour of $C_i$ in $B_{\CC}$ is in $P_i$, 
then $P_i$ contains $K_k$ and outcome (1) holds for $G$. 
Now assume that for each $i\in\{1,\dots,2^n\}$, no neighbour of $C_i$ in $B_{\CC}$ is in $P_i$. 
Suppose that some vertex $x$ in $B_{\CC}$ is in $P_i$ for some $i\in\{1,\dots,2^n\}$. 
Then since $P_i$ is connected, $G$ contains a path between $C_i$ and $x$ avoiding the neighbourhood of $C_i$ in $B_{\CC}$. 
Every such path intersects $C_1\cup\dots\cup C_{i-1}\cup C_{i+1}\cup \dots\cup C_{2^n}$, but none of these vertices are in $P_i$. 
Thus, no vertex in  $B_{\CC}$ is in $P_1\cup\dots\cup P_{2^n}$. 
Hence, no part of $\PP$ contains vertices in both $B_{\CC}$ and in the remainder of $G$. 
Therefore, $\RP{B_{\CC}}$ is a connected partition of $B_{\CC}$, and the quotient of $\RP{B_{\CC}}$ equals the subgraph of $G/\PP$ induced by those parts that intersect $B_{\CC}$. 
Since $B=G(k,t-1,1)$, by induction, $\RP{B_{\CC}}$ satisfies (1), (2) or (3). 
If outcome (1) or (3) holds for $\RP{B_{\CC}}$, then the same outcome holds for $G$. 
Now assume that outcome (2) holds for $\RP{B_{\CC}}$. 

We now show that outcome (2) holds for $G$. 
Let $H$ be a $t$-vertex graph, let $v$ be a vertex of $H$, and let $N_H(v)=\{w_1,\dots,w_d\}$. 
Since outcome (2) holds for  $\RP{B_{\CC}}$, 
the quotient of $\RP{B_{\CC}}$ contains $H-v$. 
Let $Q_1,\dots,Q_{d}$ be the parts corresponding to $w_1,\dots,w_d$. 
Then $S_{\CC}^i=V(Q_1\cup \dots\cup Q_{d})$ for some $i\in\{1,\dots,2^n\}$. 
In $G/\PP$, the vertex corresponding to $P_i$ is adjacent to $Q_1,\dots,Q_d$ and to no other vertices corresponding to parts contained in $B_\CC$. Thus, including $P_i$, $G/\PP$ contains $H$ and outcome (2) holds for $\PP$. 
Hence $G$ has the claimed properties of $G(k,t,1)$. 
\end{proof}

  \let\oldthebibliography=\thebibliography
  \let\endoldthebibliography=\endthebibliography
  \renewenvironment{thebibliography}[1]{%
    \begin{oldthebibliography}{#1}%
      \setlength{\parskip}{0ex}%
      \setlength{\itemsep}{0ex}%
  }%
  {%
    \end{oldthebibliography}%
  }


\def\soft#1{\leavevmode\setbox0=\hbox{h}\dimen7=\ht0\advance \dimen7
  by-1ex\relax\if t#1\relax\rlap{\raise.6\dimen7
  \hbox{\kern.3ex\char'47}}#1\relax\else\if T#1\relax
  \rlap{\raise.5\dimen7\hbox{\kern1.3ex\char'47}}#1\relax \else\if
  d#1\relax\rlap{\raise.5\dimen7\hbox{\kern.9ex \char'47}}#1\relax\else\if
  D#1\relax\rlap{\raise.5\dimen7 \hbox{\kern1.4ex\char'47}}#1\relax\else\if
  l#1\relax \rlap{\raise.5\dimen7\hbox{\kern.4ex\char'47}}#1\relax \else\if
  L#1\relax\rlap{\raise.5\dimen7\hbox{\kern.7ex
  \char'47}}#1\relax\else\message{accent \string\soft \space #1 not
  defined!}#1\relax\fi\fi\fi\fi\fi\fi}

\end{document}